\theoremstyle{plain}
\newtheorem{lemma}{Lemma}[section]
\newtheorem{thm}[lemma]{Theorem}
\newtheorem{cor}[lemma]{Corollary}
\theoremstyle{definition}
\newtheorem{defn}[lemma]{Definition}
\newtheorem{ex}[lemma]{Example}
\newtheorem{rmk}[lemma]{Remark}
\numberwithin{equation}{section}
\newcommand{\be}{\begin{equation}}
\newcommand{\ee}{\end{equation}}
\newcommand{\Ric}{\operatorname{Ric}}
\newcommand{\Rm}{\operatorname{Rm}}
\newcommand{\torus}{\mathbb{T}^2}
\newcommand{\tee}{\texttt{t}}
\newcommand{\ubar}{\overline{u}}
\newcommand{\MM}{M_1 \sqcup M_2}
\newcommand{\veca}{\textbf{a}}
\newcommand{\vecb}{\textbf{b}}
\newcommand{\vecc}{\textbf{c}}
\newcommand{\x}{\textbf{x}}
\newcommand{\sphere}{\mathbb{S}}
\newcommand{\R}{\ensuremath{{\mathbb{R}}}}
\newcommand{\vol}{\text{Vol}}
\newcommand{\del}{\partial}
\newcommand{\Lip}{\operatorname{Lip}}
\DeclareMathOperator{\tr}{tr}
\title{ Super Ricci flow for disjoint unions 
\\
}
\author{Sajjad Lakzian, Michael Munn}
\date{\today}
\begin{document}
\maketitle

\begin{abstract}
In this paper we consider compact, Riemannian manifolds $M_1, M_2$ each equipped with a one-parameter family of metrics $g_1(t), g_2(t)$ satisfying the Ricci flow equation. Motivated by a characterization of the super Ricci flow developed by McCann-Topping in \cite{McCT2010}, we introduce the notion of a super Ricci flow for a family of distance metrics defined on the disjoint union $\MM$. In particular, we show such a super Ricci flow property holds provided the distance function between points in $M_1$ and $M_2$ evolves by the heat equation. We also discuss possible applications and examples. 
\end{abstract}

\section{Introduction}\label{Section-Results}
For $i = 1,2$, let $M_i$ be a compact $n$-dimensional Riemannian manifold equipped with a smooth family of metrics $g_i(t)$ satisfying the Ricci flow equation introduced by Hamilton \cite{H1983} 
\be\label{RF}\frac{\partial g_i(t)}{\partial t} = -2 \Ric({g_i(t)}),\ee
for  $t \in [0, T_i)$. The short-time existence and uniqueness of  solutions was demonstrated in \cite{H1983} and we denote $T = \min(T_1, T_2)$. In this note, we consider the disjoint union $M_1 \sqcup M_2$ equipped with a one-parameter family of metrics $D^t$, for $t \in [0, T)$, so that  $\left(M_1 \sqcup M_2, D^t\right)$ is a complete, compact metric space whose metric is compatible with the evolving metrics $g_i(t)$; i.e. for $i=1,2$
\be \label{restricted d} \left. D^t \right|_{M_i} = d_{g_i(t)}, \ee
where $d_g$ denotes the distance metric induced by the Riemannian metric $g$. Following \cite{McCT2010}, we generalize the characterization of super Ricci flow solutions for an individual family of smooth metrics, say for $(M_1, g_1(t))$ or $(M_2, g_2(t))$, to the family of metric spaces $(M_1 \sqcup M_2, D^t)$ as follows

\begin{defn}\label{LMsuperRF}
With $M_1$ and $M_2$ as above, a family of metrics $D^t$ on $M_1 \sqcup M_2$, for $t \in [0,T)$, is called a {\em super Ricci flow of the disjoint union $M_1 \sqcup M_2$} provided whenever $0 <a<b<T$ and $u(x,t): M_1 \sqcup M_2 \times (a,b)\to \mathbb{R}$ is a solution to the heat equation on $M_1 \sqcup M_2$, then
\be \Lip(u,t):= \sup_{\substack{x \neq  y \\x,y \in \MM}}  \frac{|u(x,t) - u(y,t)|}{D^t(x,y)} ~\text{ is non-increasing in }~ t.\ee
\end{defn}
In Section \ref{Section-Background}, we recall work of von Renesse-Sturm \cite{vRS2005} to clarify precisely the Laplacian we are using on $\MM$ and exactly what it means for $u(x,t)$ to satisfy the heat equation for such a disconnected space (see Definition \ref{disjoint heat soln} and the discussion therein). 

Furthermore, we show that,

\begin{thm}\label{MainThm}
For $i=1,2$, let $M_i$ be a compact, oriented $n$-dimensional manifold equipped with a smooth family of metrics $g_i(t)$ satisfying the Ricci flow equation (\ref{RF}) for $t \in [0, T_i)$ and let $T = \min(T_1, T_2)$. Consider the family of metric spaces $(M_1 \sqcup M_2, D^t)$ and  suppose that for $t \in (0,T)$, 
\be  \label{d evolution} \frac{\partial}{\partial t}D^t(x,y) \geq \Delta_{M_1^t \times M_2^t}D^t(x,y), \quad  \text{for }~x \in M_1, y \in M_2, \ee
where $\Delta_{M_1^t \times M_2^t}$ denotes the Laplacian on $(M_1,g_1(t)) \times (M_2,g_2(t))$. Then the family of metrics $D^t$ is a super Ricci flow of $M_1 \sqcup M_2$.
\end{thm}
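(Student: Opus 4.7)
My plan is to split the Lipschitz supremum into three pieces according to where the two competing points lie, and to handle each piece separately: the intra-component contributions reduce immediately to the McCann--Topping characterisation applied to the individual Ricci flows $g_i(t)$, while the cross-component contribution is controlled by a parabolic maximum principle argument on the product $M_1 \times M_2$ that uses hypothesis (\ref{d evolution}) directly.

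By compactness of $\MM$, I would first write
\[
\Lip(u,t) \;=\; \max\bigl(L_1(t),\, L_2(t),\, L_{12}(t)\bigr),
\]
where $L_i(t)$ is the Lipschitz constant of $u(\cdot,t)\big|_{M_i}$ with respect to $d_{g_i(t)}$, and
\[
L_{12}(t) \;:=\; \sup_{x \in M_1,\, y \in M_2} \frac{|u(x,t) - u(y,t)|}{D^t(x,y)}.
\]
Since $g_i(t)$ is a genuine Ricci flow it is in particular a super Ricci flow, and since the restriction $u\big|_{M_i}$ satisfies the heat equation on $(M_i, g_i(t))$ (which is what the heat equation on $\MM$ reduces to, per the discussion to be given in Section \ref{Section-Background} following \cite{vRS2005}), the McCann--Topping characterisation \cite{McCT2010} immediately yields that both $L_1(t)$ and $L_2(t)$ are non-increasing in $t$.

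The crux is therefore to show that $L_{12}(t)$ is non-increasing. I would fix $0 < t_0 < T$, set $L_0 := L_{12}(t_0) \geq 0$, and introduce the auxiliary function
\[
h(x,y,t) \;:=\; u(x,t) - u(y,t) - L_0\, D^t(x,y)
\]
on $M_1 \times M_2 \times [t_0, T)$. By the choice of $L_0$, $h \leq 0$ everywhere at $t = t_0$. Using that the Laplacian on the Riemannian product splits as $\Delta_{M_1^t \times M_2^t} = \Delta_{M_1^t} + \Delta_{M_2^t}$, and that $u$ solves the heat equation on each factor, a direct computation gives
\[
\bigl(\partial_t - \Delta_{M_1^t \times M_2^t}\bigr) h(x,y,t) \;=\; -L_0 \bigl(\partial_t D^t(x,y) - \Delta_{M_1^t \times M_2^t} D^t(x,y)\bigr) \;\leq\; 0,
\]
by hypothesis (\ref{d evolution}) together with $L_0 \geq 0$. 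Hence $h$ is a subsolution of the heat equation on the closed, compact product manifold $M_1 \times M_2$ endowed with the time-dependent product metric $g_1(t) \oplus g_2(t)$. The standard parabolic maximum principle on a closed manifold then yields $\sup_{x,y} h(x,y,t) \leq \sup_{x,y} h(x,y,t_0) \leq 0$ for all $t \in [t_0, T)$. Running the same argument with $u$ replaced by $-u$ (which also solves the heat equation and has the same Lipschitz constant) supplies the reverse inequality, so that $L_{12}(t) \leq L_0$ for $t \geq t_0$, as required.

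The step I expect to require the most care is the regularity underpinning the maximum-principle computation. For $(\partial_t - \Delta)h$ to be classically meaningful one needs $D^t$ to be $C^2$ in $(x,y)$ and $C^1$ in $t$, which is implicit in reading (\ref{d evolution}) as a pointwise inequality but should be made explicit; if one wishes to relax this, both (\ref{d evolution}) and the maximum principle can be interpreted in a viscosity/barrier sense at the cost of additional bookkeeping. The degenerate boundary case $L_0 = 0$ (meaning $u(\cdot,t_0)$ agrees across components) is immediate since a heat solution with such initial data remains constant on each component with a common value, keeping $L_{12}$ zero for all later times.
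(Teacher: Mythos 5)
Your proposal is correct, and it reaches the conclusion by a genuinely different mechanism for the cross-component case than the paper does. The paper also reduces the intra-component pairs to the McCann--Topping characterisation, but for a pair $p\in M_1$, $q\in M_2$ realising the Lipschitz constant it works directly with the quotient $(x,y)\mapsto \frac{u_1(x,t)-u_2(y,t)}{D^t(x,y)}$ on $M_1\times M_2$: at the spatial maximum it uses $\nabla(\ubar/D^t)=0$ and $\Delta(\ubar/D^t)\le 0$, computes $\Delta(\ubar/D^t)=\frac{\Delta\ubar}{D^t}-\frac{\ubar\,\Delta D^t}{(D^t)^2}$ there, combines this with (\ref{d evolution}) to get $\partial_t\ubar\le \frac{\ubar}{D^t}\partial_t D^t$ at $(p,q)$, and then differentiates $\Lip(u,t)$ in $t$ to conclude. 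Your argument instead fixes $t_0$, forms the affine comparison function $h=u(x,t)-u(y,t)-L_0\,D^t(x,y)$, observes that $(\partial_t-\Delta_{M_1^t\times M_2^t})h=-L_0(\partial_t D^t-\Delta D^t)\le 0$, and invokes the scalar parabolic maximum principle on the closed product with the time-dependent metric $g_1(t)\oplus g_2(t)$ (the same Theorem 3.1.1 of Topping that the paper cites for a different purpose), then repeats with $-u$. What your route buys is that it avoids the quotient-rule computation at an interior maximum and, more importantly, avoids the paper's final step of differentiating the supremum $\Lip(u,t)$ in $t$ and passing the derivative inside the sup, a step that strictly speaking needs a Hamilton-type justification; your monotonicity of $L_{12}$ comes out for free from the comparison principle, and the decomposition $\Lip(u,t)=\max(L_1,L_2,L_{12})$ handles cleanly the possibility that the maximising pair switches between the three regimes as $t$ varies. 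Both arguments rely on the same implicit regularity of $D^t$ (enough smoothness in $(x,y)$ and $t$ to read (\ref{d evolution}) classically), which you correctly flag; and both need the fact, established in the paper's Lemma \ref{u is heat} under $D^0>0$ between the components, that a heat solution on $\MM$ restricts to smooth heat solutions on each $M_i$, which you invoke via the Section \ref{Section-Background} discussion. Your special treatment of $L_0=0$ is harmless but unnecessary, since the comparison argument already covers it.
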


\begin{rmk} The statement of Theorem \ref{MainThm} can be phrased slightly more generally in that $(M_1, g_1(t))$ and $(M_2, g_2(t))$ need only be {\it supersolutions} to the Ricci flow equation; i.e. $g_i(t)$ are super Ricci flows (see Definition \ref{Def-superRF}) on $M_i$, $i=1,2$. Indeed, the proof requires only this slightly weaker assumption. 
\end{rmk}

\begin{rmk} Note that condition (\ref{d evolution}) alone isn't enough to guarantee that the family of distance functions $D^t$ between $M_1$ and $M_2$ remain distance functions for all $t$. This is because it is possible that the triangle inequality may fail at certain times $t>0$, particularly if either $M_1$ or $M_2$ has highly negative sectional curvature. However, in the statement of Theorem \ref{MainThm} we implicitly restrict our attention to only those families $D^t$ which in fact are distance functions. In Section \ref{Section-Examples} we give simple constructions which verify that the class of such distance functions on $\MM$ is nonempty.
\end{rmk}

\textit{Acknowledgements.} This research was sponsored in part by the National Science Foundation Grants OISE \#0754379 and DMS \#1006059. We would like to thank the Graduate Center at CUNY where part of this work was completed and Prof. Sormani for her interest and support. In addition, MM would like to thank the Mathematics Institute of the University of Warwick and Peter Topping for their hospitality as this work was completed. SL also thanks Dan Knopf for helpful discussions during his visit to U Texas at Austin.

\subsection{Motivation}\label{Section-Intro}
We now say a few words of context for Theorem \ref{MainThm} and give possible perspectives for considering such a family of metric spaces $(\MM, D^t)$.

A primary advantage of Theorem \ref{MainThm} is that the nature of condition (\ref{d evolution}) is purely metric and gives a sufficient condition for a family of distance metrics on the set $\MM$ to evolve in a way that is compatible with the smooth evolution of the Ricci flow for the Riemannian metrics on $M_1$ and $M_2$. This metric perspective allows for a more broad description of solutions to the Ricci flow (or in this case, super solutions to the Ricci flow) which can persist {\it through} the development of singularities provided one has knowledge of the metric after the singular time. 

Given $M^n$ a compact, $n$-dimensional Riemannian manifold and $g(t)$ a family of smooth metrics evolving by (\ref{RF}), we say a finite time singularity develops at time $T$ if this family cannot be extended beyond $T<\infty$. Standard long time existence theorems imply that such finite time singularities develop if and only if the Riemann curvature tensor $\Rm$ blows up as $t \nearrow T$; i.e.
\be \limsup_{t\nearrow T} ~\sup _{x\in M} \left|\Rm(x, t)\right|_{g(t)} = \infty.\ee
Sesum \cite{S2005} improved this by showing that a finite-time singularity occurs if and only if 
\be \limsup_{t\nearrow T} ~\sup _{x\in M} \left|\Ric(x, t)\right|_{g(t)} = \infty.\ee
In some sense, the formation of such singularities is a `typical' property of the Ricci flow. Indeed, it follows from the parabolic maximum principle that if the scalar curvature $R$ satisfies $R \geq \alpha >0$ at time $t = 0$, then a finite time singularity must develop for $T \leq \dfrac{n}{2\alpha}$. As a result, the study of the formation of singularities remains an intensely studied aspect of the Ricci flow and geometric evolution equations in general. 


Angenenet-Knopf  were the first to give examples of finite time singularities for compact manifolds \cite{AK2004, AK2007}, although certain constructions did exist for local singularities on noncompact manifolds  \cite{S2000, FIK2003}. Specifically, Angenenet-Knopf examined the behavior of the metrics on topological spheres $\mathbb{S}^{n+1}$ evolving by the Ricci flow and showed that when the initial metric $g_0$ is sufficiently pinched, the Ricci flow will develop a neck-pinch singularity. 

A \textit{neck-pinch singularity} is a special kind of local Type I singularity and (except for the round sphere shrinking to a point) is arguably the best known and simplest example of a finite-time singularity that can develop through the Ricci flow. More precisely, a solution $(M^{n+1}, g(t))$ of the Ricci flow develops a \textit{neck pinch} at time $T < \infty$ if there exists a time-dependent family of proper open subsets $N(t) \subset M^{n+1}$ and diffeomorphisms $\phi_t: \R \times \sphere^n \to N(t)$ such that $g(t)$ remains regular on $M^{n+1} \setminus N(t)$ and the pullback $\phi_t^*\left(\left.g(t)\right|_{N(t)}\right)$ on $\R \times \sphere^n$ approaches the ``shrinking cylinder'' soliton metric
$$ds^2 + 2(n-1)(T-t) g_{can}$$
in $\mathcal{C}^{\infty}_{loc}$ as $t \nearrow T$, where $g_{can}$ denotes the round metric on the unit sphere $\sphere^n$. In \cite{AK2004}, the authors show how these neck pinch singularities arise for a class of rotationally symmetric initial metrics on $\sphere^{n+1}$. In \cite{AK2007}, they derive detailed asymptotics of the profile of the solution near the singularity as well as comparable asymptotics for fully general neck pinches whose initial metric need need not be rotationally symmetric.

Later in  \cite{ACK}, Angenent-Caputo-Knopf extended this work by constructing smooth forward evolutions of the Ricci flow starting from initial singular metrics which arise from rotationally symmetric neck pinches on $\sphere^{n+1}$ by passing to the limit of a sequence of Ricci flows with surgery. Together \cite{AK2004, AK2007, ACK}  provide a framework (albeit in the restrictive context of rotational symmetry) for developing the notion of a `canonically defined Ricci flow through singularities' as conjectured by Perelman in \cite{P2002}. Up to this point, continuing a solution of the Ricci flow past a singular time $T < \infty$ required surgery and a series of carefully made choices so that certain crucial estimates remain bounded through the flow. A complete `canonical Ricci flow through singularities' would avoid these arbitrary choices and would be broad enough to address all types of singularities that arise in the Ricci flow.

Returning now to the current paper, our motivation follows from this work of Angenent-Knopf and Angenent-Caputo-Knopf, though our result allows for application in a more general context. Since the smooth forward evolution described in \cite{ACK} performs a topological surgery on $\mathbb{S}^{n+1}$ at the singular time $T$, all future times will consist of two disjoint smooth Ricci flows on a pair of manifolds. Furthermore, although the metric $g(t)$ is no longer a smooth Riemannian metric at the singular time $t=T$, the space $\mathbb{S}^{n+1}$ does retain the structure of a metric space with distance metric denoted $d_T$ arising  from the convergence of the distance metrics $d_t$ on $(\sphere^{n+1}, g(t))$ through the evolution. As metric spaces, these spaces converge in the Gromov-Hausdorff sense as well,
\be\lim_{t \nearrow T}d_{GH}\left((\mathbb{S}^{n+1}, d_t), (\mathbb{S}^{n+1}, d_T) \right)  = 0.\ee
Our Theorem \ref{MainThm} gives a metric context in which to frame the evolution of the Ricci flow for $t > T$, {\em after} this singularity develops. 

The remainder of this paper is organized as follows. In Section \ref{Section-Examples}, we give some simple examples of metric constructions for the super Ricci flow for disjoint unions of two smooth Riemannian manifolds. In particular, we consider the situation when $M_1 \cong M_2$ and consider the case of the flat torus and the round sphere. In Section \ref{Section-Background}, we recall the characterization of the super Ricci flow given by McCann-Topping for compact Riemannian manifolds which motivates our Definition \ref{LMsuperRF}. Also, we recall a construction of von Renesse-Sturm \cite{vRS2005} and use a generalization of the Trotter-Chernov product formula for time dependent operators to describe solutions to the heat equation on the  disjoint union $\MM$. With these definitions and context in place, we then prove Theorem \ref{MainThm} in Section \ref{Section-Proof/Consequences} and give implications.

\section{Examples}\label{Section-Examples}
To better illustrate the content of Theorem \ref{MainThm} we mention a few simple examples. In general, for $(M_1, g_1(t))$ and $(M_2, g_2(t))$ as in Section \ref{Section-Results}, a family of distance metrics $D^t$ on $\MM$ for $t \in [0,T)$ is a family of non-negative functions
\be D^t: \MM \times \MM \to \mathbb{R}\ee
such that the following properties hold. For $\veca, \vecb, \vecc \in \MM$, and all $t \in [0,T)$,
\begin{itemize}
\item $D^t(\veca, \vecb) = 0$ if and only if $\veca = \vecb$
\item $D^t(\veca, \vecb) = D^t(\vecb, \veca)$
\item $D^t(\veca, \vecb) \leq D^t(\veca, \vecc) + D^t(\vecc, \vecb)$
\end{itemize}
Thus, we require these properties to hold implicitly in the statement of Theorem \ref{MainThm}. Note, however, that the metric $D^t$ is not an intrinsic distance as $\MM$ is disconnected. 

Consider the case where $(M_1, g_1(0)) \cong (M_2, g_2(0))$ and thus $g_1(t) = g_2(t)$ for all $t$ satisfying (\ref{RF}) by uniqueness. Set $D^t$ on $\MM$ to be 
\be \label{Defn-D^t}D^t(\veca, \vecb) = \begin{cases} 
d_{g_i(t)}(\veca, \vecb), \hspace{.8in} \text{ if } \veca, \vecb \in M_i\\
\\
\sqrt{L^2(t) + d_{g_i(t)}^2(\phi(\veca), \vecb)}, \text{ if } \veca \in M_1, \vecb \in M_2 ~\text{ or } ~\veca \in M_2, \vecb \in M_1, 
\end{cases}
\ee
where $\phi: M_1 \to M_2$ is the identity map and $L(t)$ depends only on $t$. Note that each of the properties for $D^t$ to be a distance function hold naturally in this construction. 

Now letting $d_t$ denote $d_{g_i(t)}$ where there is no confusion since $g_1(t) = g_2(t)$ and considering $d_t$ and $D^t$ as maps on $M_1 \times M_2$, we can relate $\Delta_{M_1^t \times M_2^t}D^t$ to $\Delta_{M_1^t \times M_2^t}d_t$. Computing in local coordinates we have
\begin{eqnarray}
\Delta (D^t)^2&=& \frac{1}{\sqrt{|g|}}\del_i \left(\sqrt{|g|} g^{ij} \del _j(D^t)^2\right)\\
&=&  \frac{1}{\sqrt{|g|}}\del_i \left(\sqrt{|g|} g^{ij} 2D^t\del _jD^t\right)\\
&=& 2 g^{ij} \del_i D^t\del_j D^t + 2D^t \Delta D^t;
\end{eqnarray}
and, directly we find $\Delta (D^t)^2 = \Delta \left( L^2(t) + d_t^2\right) = \Delta d_t^2 = 2\left | \nabla d_t\right|^2 + 2d_t \Delta d_t$, since in this simple example we assume that $L(t)$ depends {\it only} on $t$.
Thus, 
\begin{eqnarray}
d_t \Delta_{M_1^t \times M_2^t} d_t &=& \frac{1}{2}\Delta_{M_1^t \times M_2^t} (D^t)^2 - \left | \nabla^{M_1^t \times M_2^t} d_t\right|_{g(t)}^2 \\
&=& g^{ij}_t \del_i D^t \del_j D^t + D^t \Delta_{M_1^t \times M_2^t} D^t - \left| \nabla^{M_1^t \times M_2^t} d_t\right|_{g(t)}^2.
\end{eqnarray}

Therefore, 
\be \Delta_{M_1^t \times M_2^t} D^t = \frac{d_t}{D^t} \Delta_{M_1^t \times M_2^t} d_t + \frac{\left | \nabla^{M_1^t \times M_2^t} d_t\right|_{g(t)}^2}{D^t} - \frac{1}{D^t} g^{ij}_t \del_i D^t \del_j D^t.\ee
Noting that $\del_i D^t = \frac{d_t}{D^t} \del_i d_t$ thus, we can further simplify the last term in the expression above  to give
\be \label{Del eqn}\Delta_{M_1^t \times M_2^t} D^t = \frac{d_t}{D^t} \Delta_{M_1^t \times M_2^t} d_t + \frac{\left | \nabla^{M_1^t \times M_2^t} d_t\right|_{g(t)}^2}{D^t} - \frac{(d_t)^2}{(D^t)^3} g^{ij}_t \del_i d^t~ \del_j d^t.\ee
Furthermore, since $\frac{\del}{\del t}D^t= \frac{1}{D^t} \left(L\frac{\del L}{\del t} + d_t \frac{\del d_t}{\del t} \right)$ and using (\ref{Del eqn}), the inequality (\ref{d evolution}) can be written  
\be  L(t) \frac{\del L}{\del t} + d_t \frac{\del d_t}{\del t} \geq d_t \Delta_{M_1^t \times M^t_2} d_t + \left | \nabla^{M_1^t \times M_2^t} d_t\right|_{g(t)}^2 -  \left(\frac{d_t}{D^t} \right)^2 g^{ij}_t \del_i d_t  \del_j d_t,\ee
which simplifies as
\be \label{new d evolution} L(t) \frac{\del L}{\del t} + d_t \frac{\del d_t}{\del t} \geq d_t \Delta_{M_1^t \times M^t_2} d_t + \left(1 -  \left(\frac{d_t}{D^t} \right)^2\right) \left| \nabla^{M_1^t \times M_2^t} d_t\right|_{g(t)}^2;\ee
where we used the fact that, for a local basis of tangent vectors $\del_i$ on $M_1 \times M_2$,
\begin{eqnarray}
\left| \nabla^{M_1^t \times M_2^t} d_t\right|_{g(t)}^2 &=& g_t\left( \nabla^{M_1^t \times M_2^t} d_t, \nabla^{M_1^t \times M_2^t} d_t\right)\\
&=& (g_t)_{ij}~g_t^{ai}\del_a d_t \cdot g_t^{bj}\del_b d_t \\
&=& g_t^{ij} \del_i d_t \cdot \del_j d_t.
\end{eqnarray}

To make this construction more explicit, consider
\begin{ex} \label{Ex-torus}
$M_1 \cong M_2 \cong $ the flat torus $\mathbb{T}^2$.
\end{ex}
Let $(M_1, g_1(t)) \cong (M_2, g_2(t)) \cong (\mathbb{R}^2 / \mathbb{Z}^2, g_{\torus}) \cong \left(\sphere^1 \times \sphere^1, \left(\frac{1}{2\pi}\right)^2 dx^2 + \left(\frac{1}{2\pi}\right)^2 dy^2\right)$. Since $\Ric_{\torus} \equiv 0$, the flat torus is a stationary point for the Ricci flow and thus, $g_i(t) \equiv g_{\torus}$, for all $t $, and $i=1,2$. 
Define a family of metrics $D^t : \MM \times \MM \to \R^{\geq 0}$ by setting
\be D^t(\veca, \vecb) = \begin{cases} 
d_{\torus}(\veca, \vecb), \hspace{.95in} \text{ if } \veca, \vecb \in M_i\\
\\
\sqrt{L^2(t) + d_{\torus}^2(\phi(\veca), \vecb)}, \quad \text{ if } \veca \in M_1, \vecb \in M_2 ~\text{ or } ~\veca \in M_2, \vecb \in M_1, 
\end{cases}
\ee
where $\phi:\torus \to \torus$ is the identity map. To interpret (\ref{d evolution}), we consider $D^t$ and $d_t$ as functions on $\torus \times \torus$ with canonical metric  
\be g_{\torus \times \torus} = g_{\torus} \times g_{\torus} = \left(\frac{1}{2\pi}\right)^2 \left[ \left(dx^1\right)^2 + \left(dx^2\right)^2+ \left(dx^3\right)^2 + \left(dx^4\right)^2 \right].\ee
Furthermore, since the metics are stationary, the Laplacian $\Delta_{M^t_1 \times M^t_2}$ is independent of $t$ and $d_t = d$ so that, inside the cut locus,
\be \Delta_{\torus \times \torus} d(\veca, \vecb)= \left. \Delta^1_{\torus} d(\cdot, \vecb)\right|_{\veca} + \left. \Delta^2_{\torus} d(\veca, \cdot)\right|_{\vecb} = \frac{(2\pi)^2}{d_{\vecb}(\veca)} + \frac{(2\pi)^2}{d_{\veca}(\vecb)} = \frac{2(2\pi)^2}{d(\veca, \vecb)};\ee
and thus, 
\be
d_t \Delta_{M_1^t \times M_2^t} d_t =  d \cdot \Delta_{\torus \times \torus} d =  d\cdot \frac{2(2\pi)^2}{d} = 2(2\pi)^2.
\ee
Also, since $g^{ij}_t = g^{ij}$, we have 
\be g_t^{ij} \del_i d_t  \del_j d_t = g^{ij} \del_i d  \del_j d = (2 \pi)^2\sum_{i=1}^4 (\del_i d)^2 = 2(2\pi)^2.\ee
Lastly, since $\nabla^{\torus \times \torus}d = g^{ij} \del_i d \del_j = (2\pi)^2 \delta^{ij} \del_i d~ \del_j=\sum_{i=1}^4(2\pi)^2 \del_i d ~\del_i$, we get
\begin{eqnarray} \left | \nabla^{M_1^t \times M_2^t} d_t\right|_{g(t)}^2 = \left | \nabla^{\torus \times \torus} d\right|^2 &=& g_{ij} \left(\nabla^{\torus \times \torus}d \right)^i \left(\nabla^{\torus \times \torus}d \right)^j \\
&=& (1/2\pi)^2\delta_{ij} \left(\nabla^{\torus \times \torus}d \right)^i \left(\nabla^{\torus \times \torus}d \right)^j\\
&=&\sum_{i=1}^4 (1/2\pi)^2 \left[\left(\nabla^{\torus \times \torus}d \right)^i\right]^2\\
&=& \sum_{i=1}^4(1/2\pi)^2 \left[(2\pi)^2 \del_i d\right]^2 \\
&=&\!\! \sum_{i=1}^4 (2\pi)^2 (\del_i d)^2 \!= \!\left| \nabla^1 d\right|^2\!\! +\! \left| \nabla^2 d\right|^2= 2(2\pi)^2.
\end{eqnarray}
Therefore, in this setting (\ref{new d evolution}) becomes 
\be \label{d evolution for torus} L(t) \frac{\del L}{\del t} \geq 4(2\pi)^2 -2(2\pi)^2\left(\frac{d}{D^t}\right)^2.\ee
Roughly estimating $0 \leq  d/D^t \leq 1$, we can take $L(t) = 2\pi \sqrt{8t + L^2(0)}$ so that $L \frac{\del L}{\del t} = 4(2\pi)^2$ which satisfies (\ref{d evolution for torus}). Naturally, any $L(t)$ with growth larger than $t^{1/2}$ would also satisfy condition (\ref{d evolution}) as well and give another family of distance metrics satisfying the super Ricci flow on the disjoint union. 
\begin{rmk}In general, for $M_1 \cong M_2 \cong (\mathbb{R}^n / \mathbb{Z}^n, g_{{\mathbb{T}}^n})$ we would have\\ $L(t) = 2\pi \sqrt{4 n t + L^2(0)}$.
\end{rmk}
\begin{ex}\label{Ex-sphere}
$M_1 \cong M_2\cong$ the round sphere $\sphere^2$.
\end{ex}
Let $(M_1, g_1(0)) \cong (M_2, g_2(0)) \cong (\sphere^2, g_{can})$, the unit 2-sphere with its canonical round metric. For $i =1,2$ we have $\Ric(g_i(0)) = g_{can}$ so the metrics on $M_i$ evolving by (\ref{RF}) satisfy $g_i(t) = \left(1-2t \right)g_{can}$, for $t \in \left[\left.0, \frac{1}{2}\right.\right)$. We will often write $\sphere^2_t$ to denote $\left(\sphere^2,(1-2t) g_{can}\right)$ and $d$ for the distance metric induced by $g_{can}$.

As before, any family of distance metrics $D^t$ on $\MM$ must satisfy $\left.D^t\right|_{M_i} = d_{g_i(t)}$ and for $\veca \in M_1$, \mbox{$\vecb \in M_2$}, we take
\be D^t(\veca, \vecb) = \sqrt{L^2(t) + d^2_t(\phi(\veca), \vecb)} , \ee
where, as before, $\phi: \sphere^2 \to \sphere^2$ is the identity map and $d_t$ denotes the distance metric on $\sphere^2_t$. That is to say, $d^2_t =d^2_{g_i(t)} = \left(1-2t\right) d^2_{g_i(0)} = \left(1-2t\right) d^2$.

Furthermore, in this setting we have
\be d_t \frac{\del d_t}{\del t} = \sqrt{1-2t}~d \frac{\del}{\del t}\sqrt{1-2t} d= -d^2\ee
and, since $\Delta_{\sphere^2_t \times \sphere^2_t} = \frac{1}{1-2t}\Delta_{\sphere^2 \times \sphere^2}= \frac{1}{1-2t} \left[ \Delta^1_{\sphere^2}+ \Delta^2_{\sphere^2}\right]$, we have
\begin{eqnarray}
d_t \Delta_{\sphere^2_t \times \sphere^2_t} d_t = \sqrt{1-2t} d \Delta_{\sphere^2_t \times \sphere^2_t} \sqrt{1-2t} d
 &=& (1-2t)d~ \Delta_{\sphere^2_t \times \sphere^2_t} d \\
&=&\!\! d~\Delta_{\sphere^2 \times \sphere^2} d\\
&=&\!\! d\Delta^1_{\sphere^2}d + d \Delta^2_{\sphere^2} d\\
&=&\!\! 2d\cot d,
\end{eqnarray}
where in the third line 
\be \Delta^1_{\sphere^2}d(\veca, \vecb) = \left. \Delta^1d(x,y)\right|_{x = \veca, y = \vecb}= \left.\Delta^1_{\sphere^2}d(x, \vecb)\right|_{x= \veca}=\left.\Delta^1d_{\vecb}(x)\right|_{x = \veca}= \cot d_ {\vecb}(\veca) = \cot d(\veca, \vecb). \ee
Also, we have
\begin{eqnarray} \left| \nabla^{\sphere^2_t \times \sphere^2_t} d_t \right|_{g(t)}^2 = (1-2t)\left| \nabla^{\sphere^2_t \times \sphere^2_t} d_t \right|_{can}^2 &=& \left|\nabla^{\sphere^2 \times \sphere^2} d \right|_{can}^2\\
&=& \left|\nabla^1 d \right|_{can}^2 + \left|\nabla^2 d \right|_{can}^2;
\end{eqnarray}
thus, the expression for (\ref{new d evolution}) in this setting can be written as 
\be \label{sphere}L(t) \frac{\del L}{\del t} -d^2 \geq 2d \cot d+ \left(1- \left(\frac{d_t}{D^t} \right)^2\right) \left(\left|\nabla^1 d \right|_{can}^2 + \left|\nabla^2 d \right|_{can}^2 \right).\ee
Keeping in mind $0 \leq d \leq \pi$, any $L(t)$ which satisfies (\ref{sphere}) for all $t \in [0, 1/2)$ gives a suitable distance metric on $\sphere^2 \times \sphere^2$. This can also be extended to higher dimensional spheres in the obvious way.


\begin{rmk} A variation of this construction can be used for $M_1$ and $M_2$ which are only  assumed to be homeomorphic. In the definition of $D^t$ given in (\ref{Defn-D^t}), take, for  $ \veca \in M_1, \vecb \in M_2$
$$\min \left(\inf_{\phi} \sqrt{L^2(t) + d_{g_2(t)}^2(\phi(\veca), \vecb))}, ~~ \inf_{\phi}\sqrt{L^2(t) + d_{g_1(t)}^2(\veca, \phi^{-1}(\vecb))}\right), $$
where the infimum is taken over all homeomorphisms $\phi: M_1 \to M_2$ and, as before, $L(t)$ depends only on $t$.
\end{rmk}

\section{Background}\label{Section-Background}
As we hope to make clear, our current results tie together a progression of ideas which originated with a 2005 paper by M. von Renesse and K.T. Sturm \cite{vRS2005}, although its true origins can be recognized in earlier work of Bakry-Emery \cite{BakryEmery1985}, Cordero-Erausquin, McCann, Scmukenschlager \cite{Cordero-ErausquinMcCannSchmuckenschlager2001,Cordero-ErausquinMcCannSchmuckenschlager2006} and others.

\subsection{Metric characterizations of Ricci curvature lower bounds and the Ricci flow}
In \cite{vRS2005}, von Renesse-Sturm characterize uniform lower Ricci curvature bounds of smooth Riemannian manifolds $(M^n,g)$ using various convexity properties of the entropy as well as transportation inequalities of volume measures, heat kernels, and gradient estimates of the heat semigroup on $M^n$. In fact, the metric nature of the ideas presented in that paper introduced into the literature a discussion of so called ``synthetic'' definitions of Ricci curvature lower bounds which do not rely on the underlying smooth structure of the manifold and thus lend themselves to spaces lacking that smooth structure, such as metric measure spaces, Alexandrov spaces, or general metric spaces.

We state here only a small part the results in \cite{vRS2005} which are relevant to our later discussion. First a bit of notation: Let $(M^n, g)$ be a smooth, connected, complete Riemannian manifold of dimension $n$. Denoting the heat kernel on $M^n$ by $p_t(x,y)$ one can define the operators $p_t: C^{\infty}_c (M) \to C^{\infty}(M)$ and $p_t: L^2 \to L^2 (M)$ by $f \mapsto p_tf(x) : = \int_Mp_t(x,y) f(y) ~d\vol(y)$.
They prove
\begin{thm}\label{thm-vonRenesseSturm}{\em (von Renesse-Sturm, \cite{vRS2005})} For any smooth, complete Riemannian manifold $(M^n, g)$ 
$\Ric_M \geq 0$ if and only if for all bounded $f \in \mathcal{C}^{Lip}(M)$ and all $t >0$, \[\Lip(p_tf) \leq \Lip(f)\]
\end{thm}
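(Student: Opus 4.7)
The plan is to prove both directions by using Bochner's identity
\begin{equation*}
\tfrac{1}{2}\Delta|\nabla u|^2 = |\operatorname{Hess} u|^2 + \langle \nabla u, \nabla \Delta u\rangle + \Ric(\nabla u, \nabla u)
\end{equation*}
as the bridge between Ricci curvature and the evolution of $|\nabla p_tf|^2$ under the heat semigroup.

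For the forward direction, assume $\Ric \geq 0$ and, after a standard mollification reducing to smooth bounded $f$ with bounded gradient, follow the Bakry--Emery interpolation: fix $t>0$ and set $\phi(s) := p_{t-s}\bigl(|\nabla p_s f|^2\bigr)$ for $s \in [0,t]$. Differentiating under the heat semigroup and applying Bochner yields
\begin{equation*}
\phi'(s) = -2\, p_{t-s}\Bigl(|\operatorname{Hess} p_s f|^2 + \Ric(\nabla p_s f, \nabla p_s f)\Bigr) \leq 0,
\end{equation*}
so $\phi(t) \leq \phi(0)$, i.e., the pointwise gradient estimate $|\nabla p_t f|^2 \leq p_t(|\nabla f|^2)$ holds. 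Taking supremum over $x$ and using $p_t 1 \leq 1$ then gives $\Lip(p_tf) \leq \Lip(f)$.

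For the reverse direction, assume the Lipschitz contraction and suppose for contradiction that $\Ric(v,v) = -\kappa < 0$ at some $x_0 \in M$ for a unit tangent vector $v \in T_{x_0}M$. The strategy is to build a smooth, compactly supported test function $f$ such that (i) $\nabla f(x_0) = v$, (ii) the Lipschitz constant of $f$ equals $1$ and is attained at $x_0$, (iii) $\operatorname{Hess} f(x_0) = 0$, and (iv) $\Delta|\nabla f|^2(x_0) = 0$. Granted such $f$, combining $\partial_t|\nabla p_tf|^2 = \Delta|\nabla p_tf|^2 - 2|\operatorname{Hess} p_tf|^2 - 2\Ric(\nabla p_tf, \nabla p_tf)$ with (iii) and (iv) yields $\partial_t|\nabla p_tf|^2(x_0)\big|_{t=0} = 2\kappa > 0$, so $|\nabla p_tf|(x_0) > 1$ for all sufficiently small $t>0$, contradicting the hypothesis $\Lip(p_tf) \leq \Lip(f) = 1$.

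The main obstacle is the construction in the reverse direction: conditions (ii) and (iv) are in tension, since an interior maximum of $|\nabla f|^2$ at $x_0$ typically forces $\Delta|\nabla f|^2(x_0) \leq 0$ with strict inequality. I would address this by working in normal coordinates at $x_0$, starting from the affine model $f_0(x) = \langle v, \exp_{x_0}^{-1}(x)\rangle$, adding a carefully chosen polynomial correction of degree $\geq 3$ so that $|\nabla f|^2$ has a sufficiently degenerate maximum at $x_0$ with vanishing Laplacian there, and then multiplying by a smooth cutoff supported in a small normal ball. Checking that the correction plus cutoff does not push $|\nabla f|$ above $1$ elsewhere on $M$ is the bulk of the technical work, but it is a standard calculation in local coordinates. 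Some additional care is needed to justify the short-time Taylor expansion of $t\mapsto|\nabla p_tf|^2(x_0)$ in the complete, possibly non-compact, setting, but this follows from standard heat kernel estimates for compactly supported data.
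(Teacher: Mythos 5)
The paper states this result as a citation from von Renesse--Sturm and gives no proof of it, so there is no in-paper argument to compare against; your proposal has to be evaluated on its own merits.

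Your forward direction is the right argument: the Bakry--\'{E}mery interpolation $\phi(s) = p_{t-s}(|\nabla p_s f|^2)$ combined with Bochner's formula does give $\phi'(s)\le 0$ under $\Ric\ge 0$, hence $|\nabla p_t f|^2 \le p_t(|\nabla f|^2)$ and the Lipschitz contraction after taking suprema and using $p_t 1\le 1$. The usual caveats about commuting derivatives with the semigroup and the mollification step on a complete, possibly non-compact manifold apply, but this is the standard route.

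The reverse direction, however, has a genuine gap in the test-function construction. Having arranged (i)--(iv) for $h = f_0 + f_1$ in a normal chart, you propose to finish by ``multiplying by a smooth cutoff supported in a small normal ball.'' This destroys condition (ii): with $\eta$ a cutoff transitioning over an annulus of width comparable to $r$, one has $\nabla(\eta h) = \eta\nabla h + h\nabla\eta$, and there $|h|\sim r$ while $|\nabla\eta|\sim 1/r$, so $|h\nabla\eta|$ is of order one and does not shrink as $r\to 0$. Thus $\Lip(\eta h)$ is bounded away from $1$, the Lipschitz constant is no longer attained at $x_0$, and the short-time expansion --- which only shows $|\nabla p_t(\eta h)|(x_0) > 1 + ct$ --- yields no contradiction. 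This is not a ``standard calculation'' that merely needs checking; the multiplicative cutoff simply cannot be made to work here. What you need instead is a Lipschitz-preserving extension. For instance, choose the cubic correction so that $|\nabla h|^2 = 1 - \lambda|x|^2 + O(|x|^3)$ for a small $\lambda>0$, which makes $|\nabla h|\le 1$ on a small geodesically convex ball with equality at $x_0$; then McShane-extend $h$ from that ball to a globally $1$-Lipschitz bounded function and truncate. (Note that your condition (iv) is actually stronger than needed: the expansion only requires $\Delta|\nabla h|^2(x_0) > 2\Ric(v,v) = -2\kappa$, so a small negative Laplacian suffices and this loosens the construction.) Alternatively, use a function of signed distance to the hypersurface $\exp_{x_0}(v^\perp\cap B_\delta)$, which has unit gradient and vanishing Hessian at $x_0$, composed with a $1$-Lipschitz cap $\phi$ satisfying $\phi'(0)=1$, $\phi''(0)=0$. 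Either device fills the gap; the cutoff as proposed does not.
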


Later McCann-Topping \cite{McCT2010} took a dynamic approach and reinterpreted the work of von Renesse-Sturm in relation to a metric evolving by the Ricci flow. Specifically, they characterize super solutions of the Ricci flow on $M^n$ by the contractivity of mass diffusions backwards in time $t$. We refer to a super solutions of the Ricci flow as a super Ricci flow. That is 
\begin{defn}\label{Def-superRF}\textbf{(McCann-Topping; c.f. \cite{McCT2010}, Definition 1).}
For a compact, oriented $n$-dimensional manifold, a super Ricci flow is a smooth family $g(t)$ of metrics on $M$, $t \in [0,T]$, such that at each $t \in (0,T)$ and each point on $M$, one has
\be \label{superRF}  \frac{\partial g}{\partial t} + 2 \Ric(g(t)) \geq 0.\ee
\end{defn}
In addition, and more closely related to our purposes, they prove the following
\begin{thm}\label{Thm-McCT}\textbf{(McCann-Topping; c.f. \cite{McCT2010}, Theorem 2).}
Let $M^n$ be a compact, Riemannian manifold of dimension $n$. A smooth one-parameter family of metrics for $t \in [0,T)$ is a super Ricci flow if and only if whenever $0<a<b<T$ and $f: M \times (a,b) \to \R$ is a solution to $\frac{\del f}{\del t} = \Delta_{g(t)}f$, then 
\[\Lip(f, t):= \sup_{x\neq y} \frac{|f(x, t) - f(y,t)|}{d(x,y,t)} \text{ is non-increasing in } t.\] 
\end{thm}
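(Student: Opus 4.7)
The plan is to derive a single Bochner-type evolution identity for $|\nabla f|^2$ in which the tensor $h := \partial_t g + 2\Ric$ appears explicitly, and then extract both directions from it — the forward implication via a parabolic maximum principle, and the converse via a localized test-function construction that forces $\Lip(f,t)$ to increase.

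First I would compute, for any smooth $f$ on $M \times (a,b)$ solving $\partial_t f = \Delta_{g(t)} f$, the evolution
\begin{equation}
(\partial_t - \Delta_{g(t)})|\nabla f|^2 \;=\; -2|\nabla^2 f|^2 \;-\; h(\nabla f, \nabla f).
\end{equation}
This comes from differentiating $|\nabla f|^2 = g^{ij}\partial_i f\,\partial_j f$ in $t$ using $\partial_t g^{ij} = -g^{ik}g^{jl}\partial_t g_{kl}$, substituting the heat equation, and eliminating $\langle\nabla f, \nabla\Delta f\rangle$ through the time-$t$ Bochner formula. The identity cleanly isolates $h$ as the driving term.

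For the forward direction, if $g(t)$ is a super Ricci flow then $h \geq 0$ pointwise, so the right-hand side is non-positive and $|\nabla f|^2$ is a subsolution of the time-dependent heat equation. The parabolic maximum principle on the compact manifold $M$ forces $\max_{x \in M}|\nabla f|^2(\cdot, t)$ to be non-increasing in $t$. I would then invoke the standard identification $\Lip(f,t) = \max_x|\nabla f|(x,t)$ for smooth $f$ on a compact Riemannian manifold (one direction by integrating $|\nabla f|$ along a minimizing geodesic between two points, the other by letting $y \to x$ in the definition), to conclude that $\Lip(f,t)$ is non-increasing.

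For the converse I would argue contrapositively. Assume there exist $(x_0, t_0)$ and $v \in T_{x_0}M$ with $h(v,v) < 0$. The goal is to construct a heat solution whose Lipschitz constant strictly increases across $t_0$. In normal coordinates at $x_0$ I would design initial data $f_0$ at time $t_0$ that is linear in $v$ plus carefully chosen cubic corrections so that $\nabla f_0(x_0) = v$, $\nabla^2 f_0(x_0) = 0$, and $|\nabla f_0|^2(x) = |v|^2 - \epsilon|x|^2 + O(|x|^3)$ near $x_0$, then smoothly cut off so that $x_0$ is a strict global spatial maximum of $|\nabla f_0|^2$. Solving the heat equation forward from $t_0$ and evaluating the evolution identity at $(x_0, t_0)$ gives
\begin{equation}
\partial_t|\nabla f|^2(x_0, t_0) \;=\; \Delta|\nabla f_0|^2(x_0) - h(v,v) \;\geq\; -2n\epsilon - h(v,v),
\end{equation}
which is strictly positive once $\epsilon$ is chosen small compared to $|h(v,v)|$. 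Since $x_0$ is the strict spatial maximum of $|\nabla f|^2(\cdot, t_0)$, continuity keeps it the maximum for a short interval, and a Danskin-type envelope argument promotes the pointwise positivity above into a genuine strict increase of $\max_x|\nabla f|^2(\cdot, t)$, hence of $\Lip(f,t)^2$, across $t_0$, contradicting the hypothesis.

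The main technical obstacle will be the converse. Because the heat equation is forward well-posed but not backward well-posed, $f$ cannot simply be prescribed on an open two-sided interval around $t_0$: the specified data at $t_0$ must arise as the time-$t_0$ value of a genuine solution from an earlier time. The standard workaround is to impose $f_0$ as initial data at $t_0$ and work with the one-sided derivative of $\Lip(f,\cdot)$ at $t_0^+$, or to prescribe data at a slightly earlier time and use the smoothing of the heat flow together with the open/strictness conditions to guarantee that the required gradient maximum, vanishing Hessian, and flatness of the peak persist at $(x_0, t_0)$. Managing these requirements simultaneously, and ensuring the Danskin-style envelope derivative can be extracted rigorously from a compact family of smooth functions, is the delicate part; the Bochner identity itself is a routine computation.
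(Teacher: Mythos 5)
The paper does not prove this theorem: it is quoted verbatim, with attribution, from McCann--Topping \cite{McCT2010}, and the present authors use it as a black box to motivate Definition~\ref{LMsuperRF} and in the proof of Theorem~\ref{MainThm}. So there is no ``paper's own proof'' to compare against; I will assess your reconstruction on its own terms.

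Your Bochner identity
\[
\bigl(\partial_t-\Delta_{g(t)}\bigr)|\nabla f|^2 \;=\; -2|\nabla^2 f|^2 \;-\; h(\nabla f,\nabla f),\qquad h:=\partial_t g+2\Ric,
\]
is correct (sign-check: $\partial_t g^{ij}=-g^{ia}g^{jb}\partial_t g_{ab}$, then eliminate $2\langle\nabla f,\nabla\Delta f\rangle$ via the time-$t$ Bochner formula), and it is exactly the engine behind the Lipschitz-contraction characterization; your forward direction is complete. Two remarks there: (i) the parabolic maximum principle is being applied with time-dependent coefficients on a closed manifold, which is fine but worth saying; (ii) the identification $\Lip(f,t)=\max_M|\nabla f(\cdot,t)|_{g(t)}$ requires $M$ connected, which is implicit in the hypotheses. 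For the converse your strategy is the right one, but two points deserve tightening. First, you do not need a Danskin/envelope argument at all: once $x_0$ is a strict spatial maximum of $|\nabla f|^2(\cdot,t_0)$ and $\partial_t|\nabla f|^2(x_0,t_0)>0$, the inequality $\max_M|\nabla f|^2(\cdot,t)\ge |\nabla f|^2(x_0,t)>|\nabla f|^2(x_0,t_0)=\max_M|\nabla f|^2(\cdot,t_0)$ for $t$ slightly above $t_0$ already contradicts monotonicity---no differentiability of the envelope is needed. Second, on the backward-solvability issue you flag: it is a non-issue for the logical structure, because the theorem quantifies over solutions on arbitrary open intervals $(a,b)$, so you may solve forward from $t_0$, observe that $\Lip$ is strictly increasing on $[t_0,t_0+\delta)$, and then restrict to any $(t_0+\eta,t_0+\delta)$; you never need a solution defined on a two-sided neighborhood of $t_0$. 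The only genuinely technical point left is the construction of $f_0$ near $x_0$ with $\nabla f_0(x_0)=v$, $\nabla^2 f_0(x_0)=0$, and $|\nabla f_0|^2$ having a quadratic strict max with small Laplacian, which you sketch correctly; choosing the cubic coefficients to make $v^i c_{ikl}=Q_{kl}$ for a prescribed symmetric $Q$ is always possible, so this goes through. In short: correct and essentially self-contained for the forward direction, a sound but slightly over-engineered sketch for the converse, and no flaw in the underlying ideas.
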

The quantity $\Lip(f,t)$ is the Lipschitz constant of $f(\cdot, t)$ evaluated using the metric $g(t)$. It is precisely this characterization which we use to define the notion of a super Ricci flow for the disjoint union of two evolving Riemannian manifolds. However, we must first make sense of the local representation for the heat kernel on $\MM$ in order to describe what it means for a function $u(x,t)$ on $\MM$ to solve the heat equation.

\subsection{Heat kernel operators from the metric and measure}

In \cite{vRS2005}, von Renesse-Sturm focus on smooth, connected complete $n$-dimensional Riemannian manifolds $M^n$ and characterize a uniform lower Ricci curvature bound of $M^n$ using, among other things, heat kernels and transportation inequalities for uniform distribution measures on distance spheres in $M^n$. One striking advantage of these characterizations is that they depend only on the metric and measure of the underlying smooth Riemannian manifold and thus allow for a notion of a Ricci curvature lower bound depending solely this basic, non-smooth data. In fact, these characterizations ultimately led to the current definitions of Ricci curvature for arbitrary metric measure spaces introduced independently by Lott-Villanni and Sturm \cite{LV2009, St2006I, St2006II}. We recall now the original discussion of von Renesse-Sturm.

Following the comment at the end of Section 1 of \cite{vRS2005}, one can view a smooth, connected Riemannian manifold $(M,g)$ as a separable metric measure space $(M, d_g, \vol_g)$ and define a family of Markov operators $\sigma_r$ acting on the set of bounded Borel measurable functions  by $\sigma_rf(x) = \int_M f(y)~ d\sigma_{r,x}(y)$, where the measure $\sigma_{r,x}$ is defined as 
\be \label{sigma-rx} \sigma_{r,x} (A) := \frac{\vol_g(A \cap \partial B(x,r))}{\vol_g(\partial B(x,r))}, \quad A \in \mathcal{B}(M).
\ee
Here $B(x,r)$ denotes the ball of radius $r$ centered at $x$. By the Arzela-Ascoli theorem and applying the Trotter-Chernov product formula \cite{EK1986}, there exists a subsequence such that for all bounded $f \in C^{\Lip}(M)$ the limit 
\be \label{heatsoln} p_tf(x) := \lim_{j \to \infty} \left(\sigma_{\sqrt{2nt/j}} \right)^j f(x)\ee
exists and converges uniformly in $x \in M$ and locally uniformly in $t \geq 0$. In fact, if we let $\texttt{p}_t(x,y)$ denote the minimal smooth heat kernel on $M^n$ (i.e. the positive fundamental solution to $(\Delta - \frac{\del}{\del t})\texttt{p}_t(x,y) = 0$) then it follows that $\texttt{p}_tf(x) = p_tf(x)$. Thus, by  (\ref{heatsoln}) we describe solutions to the heat equation for an arbitrary metric measure space $(M,d,m)$ without relying on a smooth structure.

\subsection{Constructing a heat kernel on $M_1 \sqcup M_2$} 
Now we return the dynamic situation and consider a single smooth manifold evolving by the Ricci flow. Take $g(t)$ a family of metrics on $M$ satisfying (\ref{RF}) for $t \in [0, T)$, $T>0$. At each time $t$, just as in (\ref{sigma-rx}), define the normalized Riemannian uniform distribution on spheres centered at $x \in (M, g(t))$ of radius $r>0$ by \be \label{sigmatrx} \sigma^{t}_{r,x} (A) := \frac{\mathcal{H}^{n-1}(A \cap \partial B^{t}(x,r))}{\mathcal{H}^{n-1}(\partial B^{t}(x,r))}, \quad A \in \mathcal{B}(M),
\ee
where $B^{t}(x,r)$ denotes the ball of radius $r$ centered at $x$ with respect to the fixed  metric $g(t)$. As before, we have a family of Markov operators $\sigma^{t}_r$ on the set of bounded Borel-measurable functions $(M, g(t))$ defined above replacing $\sigma_r$ by $\sigma^t_r$ and integrating over $(M,g(t))$. Just as before, we have (for a subsequence)
\be \label{sigtconv} \left(\sigma^{t}_{\sqrt{2n\tee/j}}\right)^j f(x) \to p^{t}_\tee f(x) = e^{\tee \Delta_{g(t)}} f(x) \ee  
uniformly in $x \in (M,g(t))$ and locally uniformly in $\tee\geq 0$ for all bounded $f \in \mathcal{C}^{\text{Lip}}(M^n, g(t))$. 

Consider now the entire space time where the Ricci flow is defined for $M$; i.e. $M \times [0, T)$. Let $B$ denote the Banach space $\mathcal{C}^{\text{Lip}}(M^n, g(t))$ with the sup-norm and $\mathcal{L}(B)$ the space of bounded linear operators on $B$. For each  $t$, consider functions $F_{t}: [0, \infty) \to \mathcal{L}(\mathcal{B})$ where  
\be \label{t operators} F_{t}(\tee) = e^{\tee \Delta_{g(t)}}.\ee
Note that $F_t(0) = Id$ for every $t \in [0,T)$ and 
for any $f \in B$
\begin{eqnarray*}
F'_{t}(0)f &=& \lim_{\tee \downarrow 0} \frac{F_{t}(\tee)f - f}{\tee} =  \lim_{t \downarrow 0} \frac{e^{\tee \Delta_{g(t)}}f - f}{\tee} \\
&=& \Delta_{g(t)}f.
\end{eqnarray*}
Thus, by applying a generalization of the Trotter-Chernov product formula (\cite{V2010}, Main Theorem) to the time-dependent  operators of (\ref{t operators}), for any function $u : M \times (0,T) \to \mathbb{R}$ solving the initial value problem 
\be \label{heat IVP}
\begin{cases}
\dfrac{d}{dt}u(\x,t) = \Delta_{g(t)}u(\x,t) \\
u(\x,0) = f(\x),
\end{cases}
\ee
for which there exists a corresponding one-parameter family of bounded linear operators $U(t, 0)_{0 \leq t \leq T}$ in $B$ such that  $u(\x,t) = U(t,0)f(\x)$, it follows that for all $0 \leq t \leq T$ we have
\be \label{general Trotter}
U(t,0) = \lim_{m\to \infty} \prod^0_{i = m-1}F_{\frac{i}{m}t}\left(\frac{t}{m}\right) = \lim_{m\to \infty} \prod^0_{i = m-1} e^{\frac{t}{m}\Delta_{g\left(\frac{i}{m}t\right)}}
\ee
with convergence of the limit in the strong operator topology of $\mathcal{L}(B)$. Combining this (\ref{sigtconv}) we can further write, for any $f \in B$,
\be \label{RF IVP soln} u(\x, t) = U(t,0) f(\x) = \lim_{m\to \infty} \prod^0_{i = m-1} \lim_{j \to \infty} \left(\sigma^{\frac{i}{m}t}_{\sqrt{\frac{2nt}{jm}}}\right)^j f(\x).\ee 
Naturally, as we saw earlier, this description gives a metric measure characterization of solutions to the  heat equation on the evolving manifold $(M,g(t))$. 

Finally, we turn our attention to the situation of the current paper and use the characterization above to describe solutions for the heat equation on $\MM$. Note that the description in (\ref{RF IVP soln}) is locally defined and thus allows for generalization to the disjoint union $\MM$. Indeed, as $j \to \infty$ the operators  $\sigma^{\frac{i}{m}t}_{\sqrt{\frac{2nt}{jm}}}$ are ultimately restricted to individual components $M_1$ or $M_2$ of $\MM$ depending on whether $x \in M_1$ or $x \in M_2$ (resp.). Motivated by the discussion above we define

\begin{defn} \label{disjoint heat soln}
Let $(M_i, g_i(t))$, for $i=1,2$, be compact Riemannian manifolds supporting smooth families of metrics satisfying the Ricci flow equation given by (\ref{RF}) for $t \in [0, T_i)$. Also, let $D^t$ be a family of distance functions on $\MM$ so that each $t \in [0, \min(T_1,T_2))$ we have $(\MM, D^t)$ is a complete, compact metric space compatible with the family of metrics $g_i(t)$ on $M_i$ resp.; i.e. for $i=1,2$,
\be \label{restricted d} \left. D^t \right|_{M_i} = d_{g_i(t)},\ee
and such that
\be\frac{\partial}{\partial t}D^t(x,y) \geq \Delta_{M_1^t \times M_2^t}D^t(x,y), \quad  \text{for }~x \in M_1, y \in M_2. \ee
A function $u: M_1 \sqcup M_2 \times (0,T) \to \mathbb{R}$ is said to solve the initial value problem (\ref{heat IVP}) on $\MM$ for $f \in \mathcal{C}^{\text{Lip}}\left(M_1 \sqcup M_2, D^t\right)$, provided
\be u(\x, t) = \lim_{m\to \infty} \prod^0_{i = m-1} \lim_{j \to \infty} \left(\sigma^{\frac{i}{m}t}_{\sqrt{\frac{2nt}{jm}}}\right)^j f(\x).\ee 
\end{defn}
Note that 
\begin{lemma}\label{u is heat}
Let $(M_i, g_i(t))$, for $i=1,2$, and $(\MM, D^t)$ be as above and suppose $D^0(x,y)  >0$ for all $x\in M_1, y\in M_2$. A function $u: \MM \times (0,T) \to \R$ solves the initial value problem (\ref{heat IVP}) on $M_1 \sqcup M_2$ if and only if $\left.u\right|_{M_i}$ and satisfies smooth heat equation on $M_i$, for $i = 1,2$.
 \end{lemma}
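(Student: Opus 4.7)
The plan is to show that the product-formula characterization of a heat solution on $\MM$ in Definition \ref{disjoint heat soln} decouples into the ordinary smooth heat equation on each component $M_i$. Two ingredients drive the argument: a uniform separation estimate for $M_1$ and $M_2$ under $D^t$, and a localization of the Markov averaging operators $\sigma^t_r$. Since $\MM$ is compact and $D^0(x,y)>0$ on $M_1 \times M_2$, the number $\delta_0 := \inf_{x \in M_1,\, y \in M_2} D^0(x,y)$ is strictly positive. To propagate this lower bound to every $t \in [0,T)$, I would apply the parabolic minimum principle to $D^t$ viewed as a supersolution on the closed product $M_1 \times M_2$ equipped with $g_1(t) \times g_2(t)$, using the standing inequality $\partial_t D^t \geq \Delta_{M_1^t \times M_2^t} D^t$ built into Definition \ref{disjoint heat soln}. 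This yields $\inf_{(x,y) \in M_1 \times M_2} D^t(x,y) \geq \delta_0$ uniformly in $t \in [0,T)$.

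With uniform separation in hand, fix $t \in [0,T)$ and $\x \in M_i$ for some $i \in \{1,2\}$. For any radius $r < \delta_0$, the sphere $\partial B^t(\x, r)$ lies entirely in $M_i$, since every point of $M_{3-i}$ is at $D^t$-distance at least $\delta_0 > r$ from $\x$. Consequently the Markov operator $\sigma^t_{r,\x}$ coincides with the normalized uniform distribution on the corresponding sphere inside $(M_i, g_i(t))$ read intrinsically. The same identification holds at every base point $z \in M_i$, so a one-line induction on $j$ gives
\[
(\sigma^t_r)^j f(\x) \;=\; (\sigma^t_r)^j (f|_{M_i})(\x)
\]
for every $j \geq 1$ whenever $r < \delta_0$, the right-hand side interpreted via the intrinsic operator on $M_i$. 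In the double limit of Definition \ref{disjoint heat soln} the radii $r_{m,j} := \sqrt{2nt/(jm)}$ are eventually smaller than $\delta_0$ as $j \to \infty$, so the inner limit reduces to the intrinsic one and, by (\ref{sigtconv}), equals $e^{(t/m)\Delta_{g_i(kt/m)}}(f|_{M_i})(\x)$ with the appropriate index $k$.

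The outer product is then a composition of heat-semigroup operators living entirely on $(M_i, g_i(\cdot))$, and the generalized Trotter--Chernov formula (\ref{general Trotter}) applied on $M_i$ identifies $u(\x,t)$ with $U^{M_i}(t,0)(f|_{M_i})(\x)$ for $\x \in M_i$, i.e., $u|_{M_i}$ is the classical solution of the heat equation on $(M_i, g_i(t))$ with initial data $f|_{M_i}$. Both directions of the equivalence follow from this single identity, read forward for $(\Rightarrow)$ and backward for $(\Leftarrow)$. I expect the main technical obstacle to be the separation estimate in the first step: the parabolic minimum principle needs the inequality $\partial_t D^t \geq \Delta D^t$ to be usable despite possible non-smoothness of $D^t$ along cut loci of the individual factors, which should be resolved by the standard $D^t + \varepsilon t$ perturbation together with a barrier argument, or by reading the inequality in the viscosity sense.
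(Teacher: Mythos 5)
Your proposal is correct and follows essentially the same route as the paper: positivity of $D^0$ plus the maximum principle to keep the components separated, localization of the Markov operators $\sigma^t_r$ to a single component once $r$ is small, and then the generalized Trotter--Chernov formula on each $M_i$ to identify $u|_{M_i}$ with the classical solution. The only cosmetic differences are that you extract a uniform lower bound $\delta_0$ over $t\in[0,T)$ (the paper works pointwise in $t$, letting $j\to\infty$ first for each fixed time slice, which also suffices) and that you explicitly flag the cut-locus regularity issue in the maximum-principle step, which the paper leaves implicit by citing Topping's smooth maximum principle.
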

 
 \begin{proof} First, note that if $D^0(x,y) >0$ for $x \in M_1, y\in M_2$ at the initial time $t=0$, then  by the maximum principle (see, for example, Theorem 3.1.1 of \cite{T2006}) we have $$D^t(x,y) >0, \text{ for all } t >0 \text{ and } x \in M_1, y \in M_2.$$ For a fixed $t$, it follows that the measures $\sigma^t_{r,x}$ when defined on $M_1 \sqcup M_2$ agree with $\left. \sigma^t_r \right|_{M_i}$ for $x \in M_i$ provided $r$ is taken small enough; namely $r < \inf_{x\in M_1, y\in M_2} D^t(x,y)$.  Thus, for $j$ large enough it follows that 
 \be \label{sigma restriction} \sigma ^{\frac{i}{m}t}_{\sqrt{\frac{2nt}{jm}}} =  \left.\sigma ^{\frac{i}{m}t}_{\sqrt{\frac{2nt}{jm}}}\right|_{M_i}.\ee
Now for $u: \MM \times (0,T) \to \R$ which satisfies the IVP given in (\ref{heat IVP}) we have that
\be \left.u(\x, t)\right|_{M_1} = \lim_{m\to \infty} \prod^0_{i = m-1} \lim_{j \to \infty} \left(\left.\sigma^{\frac{i}{m}t}_{\sqrt{\frac{2nt}{jm}}}\right|_{M_1}\right)^j f(\x).\ee
As pointed out in the discussion above, for a smooth Riemannian manifold $M_1$ whose heat kernel is  denoted by $\texttt{p}_{t}(x,y)$, since $\texttt{p}_{t}f(x) = p_tf(x)$, we have $$\texttt{p}_tf(x) = \lim_{j \to \infty} \left(\sigma_{\sqrt{2nt/j}} \right)^j f(x).$$ Thus, we can write using the notation as before where $F_t(\texttt{t})=e^{\texttt{t} \Delta_{g(t)}} = p_{\texttt{t}}^t$,
\be \left.u(\x, t)\right|_{M_i} = \lim_{m\to \infty} \prod^0_{i = m-1} \texttt{p}^{\frac{i}{m}t}_{\frac{t}{m}}f(\x)=\lim_{m\to \infty} \prod^0_{i = m-1} F_{\frac{i}{m}t}\left(\frac{t}{m}\right) f(\x) = U(t,0)f(\x).\ee
Thus, by the generalized Trotter product formula and (\ref{general Trotter}), it follows that $u(\x,t)|_{M_1}$ solves the heat equation on $M_1$. In precisely the same way, we verify that $\left.u\right|_{M_2}$ also satisfies the heat equation on $M_2$.

Furthermore, suppose some function $u(\x, t)$ defined on $\MM$ when restricted to either $M_i$ satisfies the heat equation on that component. Again by (\ref{sigma restriction}) it follows that $u(\x,t)$ satisfies the IVP on the disjoint union $\MM$.
\end{proof}
 
\section{Proof of Theorem \ref{MainThm} and consequences}\label{Section-Proof/Consequences}
\begin{proof} (Theorem \ref{MainThm}).
With $M_i$ as above, let $u_i : M_i \times (0,T)$ be solutions to $\dfrac{\partial u_i}{\partial t} = \Delta_{g_i(t)}u_i$, $i = 1,2$. Consider the disjoint union $M_1 \sqcup M_2$ and define a function $u: M_1 \sqcup M_2 \times (0,T) \to \R$ by
\be
u(x, t) = \begin{cases}
u_1(x,t), ~\text{ when } x \in M_1  \\
u_2(x,t), ~\text{ when } x \in M_2.
\end{cases}
\ee
Recall, by assumption \be \label{posdt}D^t(m_1,m_2) > 0, ~\text { for all }~ m_1 \in M_1, m_2 \in M_2, t >0,\ee so by Lemma \ref{u is heat}, the function $u(x,t)$ satisfies the heat equation on $M_1 \sqcup M_2$. Note that for any $t\in[0,T)$, there exists $p, q \in (M_1 \sqcup M_2, D^t)$ such that 
\be \Lip(u, t) = \frac{|u(p,t)-u(q,t)|}{D^t(p, q)}. \ee
Clearly, if $p, q \in M_i$, fixed, then by Theorem \ref{Thm-McCT} of Topping-McCann, the property that $\Lip(u, t)$ is non-increasing as a function of $t$ is equivalent to $g_i(t)$ being a solution to the super Ricci flow. Thus, we are done since each $(M_i, g_i(t))$ in fact solves (\ref{RF}) by assumption and so obviously (\ref{superRF}). Therefore, we focus on the case when the Lipschitz constant of $u$ is achieved by a point in $M_1$ and a point in $M_2$. 

Fix $t\in (0,T)$. Without loss of generality, assume the value of $\Lip(u,t)$ is attained by the points $p\in M_1, q\in M_2$. In a neighborhood sufficiently near $(p,q) \in M_1 \times M_2$, we may also assume (without loss of generality) that $u_1(x,t) - u_2(y,t) \geq 0$ so that the function on $M_1\times M_2$ given by \[(x,y) \mapsto \frac{u_1(x,t) - u_2(y,t)}{D^t(x,y)} \] is nonnegative and has an absolute maximum at the point $(p,q)$. Therefore,

\be
\label{gradzero} \left. \nabla\left(\frac{u_1(x,t) - u_2(y,t)}{D^t(x,y)}\right)\right|_{(p,q)}=0,
\ee
and
\be
\label{Delnonpos} \left. \Delta \left(\frac{u_1(x,t) - u_2(y,t)}{D^t(x,y)}\right)\right|_{(p,q)} \leq 0.
\ee
Furthermore, for points $x,y \in M_1 \sqcup M_2$ sufficiently close to $p \in M_1$ and $q \in M_2$ (resp.) it follows from (\ref{posdt}) that $u_1(x,t) - u_2(y,t) = \left. u\right|_{M_1}(x,t) - \left. u\right|_{M_2}(y,t) = u(x,t) - u(y,t)$. 

To simplify notation, set $\ubar(x,y,t) =u_1(x,t) - u_2(y,t)$. From (\ref{gradzero}) we have
\be \nabla \left(\frac{\ubar}{D^t}\right) = \frac{D^t\nabla \ubar - \ubar \nabla D^t}{\left(D^t\right)^2} = 0,\ee
and thus
\be \label{u nabla D^t} \ubar \nabla D^t = D^t \nabla \ubar.\ee
To evaluate (\ref{Delnonpos}), note that
\begin{eqnarray}
\nabla^2\left(\frac{\ubar}{D^t}\right) &=& \frac{\left(D^t\right)^2\left(\nabla D^t \nabla \ubar + D^t \nabla^2 \ubar - \nabla \ubar \nabla D^t - \ubar \nabla ^2 D^t\right) - 2 D^t\nabla D^t(D^t\nabla \ubar - \ubar \nabla D^t)}{\left(D^t\right)^4} \hspace{.4in} \\
&=&\frac{\nabla^2\ubar}{D^t} - \ubar \frac{\nabla ^2 D^t}{\left(D^t\right)^2} - \frac{\nabla D^t \otimes \nabla \ubar}{\left(D^t\right)^2} - \frac{\nabla \ubar \otimes \nabla D^t}{\left(D^t\right)^2} + 2 \frac{\ubar \nabla D^t \otimes \nabla D^t}{\left(D^t\right)^3};
\end{eqnarray}and, therefore
\begin{eqnarray}
\Delta\left( \frac{\ubar}{D^t}\right) &=&\tr \nabla^2\left(\frac{\ubar}{D^t} \right)\\
 &=& \tr \frac{\left(D^t\right)^2\left( \nabla D^t \nabla \ubar + D^t \nabla^2 \ubar - \nabla \ubar \nabla D^t - \ubar \nabla^2 D^t\right) - 2D^t \nabla D^t (D^t \nabla \ubar - \ubar \nabla D^t)}{\left(D^t\right)^4} \hspace{.4in}\\
&=& \frac{\Delta \ubar}{D^t} - \frac{\ubar \Delta D^t}{\left(D^t\right)^2} - 2 \frac{\langle\nabla D^t, \nabla \ubar \rangle}{\left(D^t\right)^2} + 2 \frac{\ubar\left| \nabla D^t \right|^2}{\left(D^t\right)^3},
\end{eqnarray}
where we used (\ref{u nabla D^t}) to evaluate in the last term. Furthermore, using (\ref{u nabla D^t}) to write $\nabla \ubar = \frac{\ubar \nabla D^t}{D^t}$, we have
\be 2\frac{\langle \nabla D^t, \nabla \ubar,\rangle }{\left(D^t\right)^2} = \frac{\langle \nabla D^t, \frac{\ubar \nabla D^t}{D^t}\rangle }{\left(D^t\right)^2} = 2 \frac{\ubar \left|\nabla D^t \right|^2}{\left(D^t\right)^3}
\ee
which implies 

\be \left. \Delta\left( \frac{\ubar}{D^t}\right)\right|_{(p,q)} = \frac{\Delta \ubar}{D^t} - \frac{\ubar \Delta D^t}{\left(D^t\right)^2}.\ee
So, by (\ref{Delnonpos}), it follows that at $(p,q)$
\be \frac{\Delta \ubar}{D^t} \leq \frac{\ubar \Delta D^t}{\left(D^t\right)^2};\ee
or, equivalently,
\be \Delta \ubar (p,q) \leq \frac{\ubar(p,q)}{D^t(p,q)} \Delta D^t(p,q). \ee

By assumption, $\frac{\partial}{\partial t} D^t \geq \Delta D^t$, and since $\frac{\ubar}{D^t} \geq 0$ we get
\be \Delta \ubar \leq \frac{\ubar}{D^t} \frac{\partial D^t}{\partial t},\ee
and, thus, since $\ubar: \left(M_1 \times M_2\right) \times (0,T)\to \mathbb{R}$ solves the heat equation by Lemma \ref{u is heat},  
\be 
\label{du/dt} \frac{\partial \ubar}{\partial t}\leq \frac{\ubar}{D^t} \frac{\partial D^t}{\partial t}.
\ee

Finally, note that 
\be
\frac{\partial }{\partial t} \Lip(u,t) = \frac{\partial }{\partial t} \sup_{\substack{x\neq y\\ x \in M_1, y \in M_2}}\frac{|u(x,t) - u(y,t)|}{D^t(x,y)} =  \sup_{\substack{x\neq y\\ x \in M_1, y \in M_2}} \frac{D^t \frac{\partial \ubar}{\partial t} - \ubar \frac{\partial D^t}{\partial t}} {\left(D^t(x,y) \right)^2}
\ee

Since (\ref{du/dt}) holds for any pair of points which achieve the Lipschitz constant, it follows that $\frac{\partial}{\partial t} \Lip(u,t) \leq 0$ and thus we have $\Lip(u,t)$ is decreasing as a function of $t$ and we are done.
\end{proof}


This can be easily generalized to address additional components.
\begin{cor}
For $i = 1,2,\cdots,k$, let $(M_i, g_i(t))$ be compact  $n$-dimensional manifolds whose metrics $g_i(t)$ satisfy (\ref{RF}) for $t \in [0,T_i)$. Consider a family of metric spaces $(M_1 \sqcup M_2 \sqcup \cdots M_k, D^t)$ for $t \in (0,T)$, $T=\min(T_1,T_2,\cdots,T_k)$ and suppose that $D^t$ satisfies (\ref{d evolution}) for all $x \in M_i$, $y\in M_j$ with $i \neq j$, then the family of metrics $D^t$ is a super Ricci flow of $M_1 \sqcup M_2 \sqcup \cdots \sqcup M_k$.
\end{cor}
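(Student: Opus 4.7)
The plan is to reduce this to Theorem \ref{MainThm} via a case split on which two components contain the pair of points realizing the Lipschitz constant. First I would extend Lemma \ref{u is heat} to the $k$-fold disjoint union. The argument in the proof of that lemma is purely local: it uses only that $D^0(x,y) > 0$ for all pairs $x \in M_i$, $y \in M_j$ with $i \neq j$, combined with the maximum principle applied to $\partial_t D^t \geq \Delta D^t$, to conclude that $\inf_{x \in M_i, y \in M_j} D^t(x,y) > 0$ for every such pair and every $t$. Taking $r > 0$ smaller than the minimum of these separations, the Markov operators $\sigma^t_{r,x}$ in the Trotter-Chernov construction restrict to the component containing $x$, and the resulting solution $u$ of the heat equation on $M_1 \sqcup \cdots \sqcup M_k$ is precisely the one whose restriction $u|_{M_i}$ solves the classical heat equation on each $(M_i, g_i(t))$ individually.

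Next, let $u : (M_1 \sqcup \cdots \sqcup M_k) \times (0,T) \to \mathbb{R}$ be a solution to the heat equation and fix $t \in (0,T)$. By compactness choose $p, q$ attaining
\be \Lip(u,t) = \frac{|u(p,t) - u(q,t)|}{D^t(p,q)}. \ee
I would then distinguish two cases. If $p, q \in M_i$ for a common index $i$, then $D^t|_{M_i \times M_i} = d_{g_i(t)}$ and near the pair $(p,q)$ only $u|_{M_i}$ enters the difference quotient; since $g_i(t)$ solves \eqref{RF} and hence is a super Ricci flow, Theorem \ref{Thm-McCT} of McCann-Topping applies on $(M_i, g_i(t))$ and gives that the time derivative of this difference quotient at $(p,q)$ is non-positive. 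If instead $p \in M_i$ and $q \in M_j$ with $i \neq j$, then the hypothesis provides $\partial_t D^t \geq \Delta_{M_i^t \times M_j^t} D^t$ on a neighborhood of $(p,q)$ in $M_i \times M_j$, and the calculation in the proof of Theorem \ref{MainThm} goes through verbatim with $M_1, M_2$ replaced by $M_i, M_j$: from $\nabla(\ubar / D^t)|_{(p,q)} = 0$ and $\Delta(\ubar / D^t)|_{(p,q)} \leq 0$ one obtains $\partial_t \ubar \leq (\ubar / D^t) \partial_t D^t$, whence the time derivative of the corresponding difference quotient is again non-positive at $(p,q)$.

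Combining the two cases, $\partial_t \Lip(u,t) \leq 0$ at every $t \in (0,T)$, so $\Lip(u,t)$ is non-increasing and the disjoint union $M_1 \sqcup \cdots \sqcup M_k$ equipped with $D^t$ is a super Ricci flow in the sense of Definition \ref{LMsuperRF}. The only non-routine point is the extension of Lemma \ref{u is heat}, and even there the main obstacle is essentially notational: one must check that the maximum principle argument for positivity of $D^t$ between components continues to apply pairwise without any cross-component interaction, which it does because condition \eqref{d evolution} is imposed on every pair $(i,j)$ with $i \neq j$ independently.
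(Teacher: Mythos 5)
Your proposal is correct and spells out exactly the reduction the paper has in mind (the paper offers no proof, remarking only that the result is an easy generalization of Theorem \ref{MainThm}). The case split on whether the pair realizing $\Lip(u,t)$ lies in a single component or in two distinct components $M_i, M_j$, together with the pairwise extension of Lemma \ref{u is heat} via the maximum principle applied to each $D^t|_{M_i \times M_j}$, is precisely the intended argument.
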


Furthermore, considering $(\MM, D^t)$ as a family of metric spaces, the evolution inequality given in (\ref{d evolution}) also provides control on how the distance between $M_1$ and $M_2$ changes over time. Namely, if at the initial time $t = 0$ we have $D^0(x,y) \geq c >0$, then  $D^t(x,y) \geq c$ for all $t > 0$. This follows from a direct application of the maximum principle.

S. ~Lakzian\\
{\sc CUNY Graduate Center, 365 Fifth Ave, NY, NY 10016, USA}\\
{\em Email address:} \texttt{slakzian@gc.cuny.edu}

M. ~Munn\\
{\sc University of Missouri, Columbia, MO 65201, USA\\
University of Warwick, Coventry, CV4 7AL, UK\\}
{\em Email address:} \texttt{munnm@missouri.edu}

\end{document}